\newlist{Case}{enumerate}{2}
\setlist[Case, 1]{%
    label           =   {\bfseries Case \arabic*.},
    labelindent=1em ,labelwidth=1.3cm, labelsep*=1em, leftmargin =!
}
\setlist[Case, 2]{%
    label           =   {\bfseries Subcase \arabic{Casei}.\arabic*.},
    labelindent=-1em ,labelwidth=1.3cm, labelsep*=1em, leftmargin =!
}
\title{Isodiametric inequality for vector spaces}
\author{
Jiaqi Liao\thanks{State Key Laboratory of Mathematical Sciences, Academy of Mathematics and Systems Science, Chinese Academy of Sciences, Beijing 100190, China  \&  School of Mathematical Sciences, University of Chinese Academy of Sciences, Beijing 100049, China.  Emails: {\texttt 975497560@qq.com, yangy@amss.ac.cn}. Supported by National Key RD Program of China No. 2023YFA100960}
\and 
Hong Liu\thanks{Extremal Combinatorics and Probability Group (ECOPRO), Institute for Basic Science (IBS), Daejeon, South Korea. Emails: {\texttt hongliu@ibs.re.kr}. Supported by Institute for Basic Science IBS-R029-C4.}
\and
Guiying Yan\footnotemark[1]
}
\date{}
\newtheorem{theorem}{Theorem}[section]
\newtheorem{lemma}[theorem]{Lemma}
\newtheorem{proposition}[theorem]{Proposition}
\theoremstyle{definition}
\theoremstyle{remark}
\numberwithin{equation}{section}
\newcommand{\CE}{\mathcal{E}}
\newcommand{\CF}{\mathcal{F}}
\newcommand{\CG}{\mathcal{G}}
\newcommand{\CV}{\mathcal{V}}
\newcommand{\BF}{\mathbb{F}}
\newcommand{\BN}{\mathbb{N}}
\newcommand{\gs}{\geqslant}
\newcommand{\ls}{\leqslant}
\newcommand{\cb}[1]{\left\{#1\right\}}
\newcommand{\pt}[1]{\left(#1\right)}
\newcommand{\an}[1]{\langle#1\rangle}
\newcommand{\abs}[1]{\left|#1\right|}
\newcommand{\ce}[1]{\lceil#1\rceil}
\newcommand{\fl}[1]{\lfloor#1\rfloor}
\newcommand{\qbinom}[2]{\binom{#1}{#2}_q}
\newcommand{\diam}{\mathrm{diam}}
\newcommand{\supp}{\mathrm{supp}}
\begin{document}
\maketitle

\begin{abstract}
	
	A theorem of Kleitman states that a collection of binary vectors with diameter $d$ has cardinality at most that of a Hamming ball of radius $d/2$. In this paper, we give a $q$-analog of it.
	
\end{abstract} \maketitle
	
\section{Introduction}

The classical isodiametric inequality in euclidean space states that the volume of a set with given diameter is maximized by euclidean balls. Analogues of isodiametric inequalities have been established in various settings. In the discrete setting, resolving a conjecture of Erd\H{o}s, Kleitman~\cite{MR200179} in 1966 famously proved an isodiametric inequality for Hamming space, the space of binary vectors equipped with Hamming distance. We shall state his result via graphs. Given a graph $G = (V, E)$, let $\delta_G(\cdot,\cdot): V^2\rightarrow \mathbb{N}$ be the graph distance, i.e. the length of the shortest path between two vertices. For a subset $\CF\subseteq V$, its diameter is $\diam(\CF):=\max_{a,b\in \CF}\delta_G(a,b)$.
Note that $(V,\delta_G)$ is a metric space. The \emph{Hamming graph on $[n]$} has vertex set $2^{[n]}$ and two vertices $A,B\in 2^{[n]}$ are adjacent if $A\subseteq B$ and $\abs{B}-\abs{A}=1$. Kleitman's theorem is an isodiametric inequality on Hamming graphs, which reads as follows.

\begin{theorem}[Kleitman, \cite{MR200179}]\label{Kleitman}
	
	Let $n > d$  and $G$ be the Hamming graph on $[n]$. Given $\CF\subseteq V(G)$, if $\diam(\CF) \ls d$, then	
	\[\abs{\CF} \ls \left\{\begin{aligned}
		&\sum_{i = 0}^{t}\binom{n}{i}&\qquad&\text{if } d = 2t;\\
		&\sum_{i = 0}^{t}\binom{n}{i} + \binom{n - 1}{t}&\qquad&\text{if } d = 2t + 1.
	\end{aligned}\right.\]	
	
\end{theorem}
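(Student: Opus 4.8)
The plan is to prove the upper bound; the matching constructions are Hamming balls. First observe that in the Hamming graph the distance $\delta_G(A,B)$ equals the size of the symmetric difference $\abs{A\triangle B}$, so $\diam(\CF)\le d$ means $\abs{A\triangle B}\le d$ for all $A,B\in\CF$. The extremal families are a ball of radius $t$, namely $\{A:\abs A\le t\}$ (diameter $2t$), and for odd $d=2t+1$ the same ball together with all $(t+1)$-sets containing a fixed element, which adds $\binom{n-1}{t}$ sets while keeping the diameter at $2t+1$. The overall strategy is to first reduce to a structurally simple family by compression, and then run an induction on $n$; throughout I use the recursion $F(n,d)=F(n-1,d)+F(n-1,d-2)$ satisfied by the claimed bound $F(n,d)$ (Pascal's rule on balls), together with the simplification $F(n,2t+1)=2F(n-1,2t)$ in the odd case.

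\textbf{Compression step.} For $i\in[n]$ let $C_i$ be the down-compression that replaces $A$ by $A\setminus\{i\}$ whenever $i\in A$ and $A\setminus\{i\}\notin\CF$, and fixes $A$ otherwise. I would prove that $C_i$ is injective on $\CF$ (hence preserves $\abs\CF$) and does not increase the diameter. The only delicate case in the diameter claim is a pair $A,B\in\CF$ with $i\in A\cap B$ where $A$ is compressed ($A\setminus\{i\}\notin\CF$) but $B$ is not ($B\setminus\{i\}\in\CF$): here the new distance is $\abs{(A\setminus\{i\})\triangle B}=\abs{A\triangle B}+1$, but $A,B\setminus\{i\}\in\CF$ forces $\abs{A\triangle B}+1=\abs{A\triangle(B\setminus\{i\})}\le d$, so the new distance is still $\le d$. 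Iterating $C_1,\dots,C_n$ to stability, I may therefore assume $\CF$ is a down-set (a simplicial complex) of the same size and with $\diam(\CF)\le d$.

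\textbf{Inductive split.} Write $\CF_0=\{A\in\CF:n\notin A\}$ and $\CF_1=\{A\setminus\{n\}:n\in A\in\CF\}$, both down-sets in $2^{[n-1]}$ with $\CF_1\subseteq\CF_0$ and $\abs\CF=\abs{\CF_0}+\abs{\CF_1}$. Since $n$ contributes exactly $1$ to every symmetric difference between an $n$-free set and an $n$-containing one, one checks $\diam(\CF_0)\le d$ and, more precisely, that the cross-diameter $\max_{A\in\CF_0,\,B\in\CF_1}\abs{A\triangle B}\le d-1$. One is then tempted to bound $\abs{\CF_0}$ and $\abs{\CF_1}$ separately by the single-family estimate for $n-1$, matching the two terms of the recursion.

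\textbf{Main obstacle.} That naive separation overshoots: the cross-diameter bound only forces $\diam(\CF_1)\le d-1$, whereas in the extremal configurations $\CF_1$ is genuinely smaller—it has diameter $d-2$ in the even case, and both $\CF_0,\CF_1$ fall short of their individual maxima in the odd case. The crux is therefore to exploit the coupling between $\CF_0$ and $\CF_1$ carried by the cross-diameter condition. I would do this by strengthening the induction hypothesis to a nested two-family statement: for down-sets $\CG\subseteq\CH$ in $2^{[m]}$ with $\diam(\CH)\le d$ and cross-diameter $\le d-1$, bound $\abs\CH+\abs\CG$ by $F(m+1,d)$. Splitting such a pair on the last coordinate yields four down-sets forming two nested pairs; each parent constraint descends unchanged to like--like halves and drops by one on the crossing half, so the diagonal descendants reproduce the hypothesis while the crossing pairs (now of cross-diameter $d-2$) supply exactly the slack the naive bound ignored. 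Making this coupled bookkeeping close—so that the two different extremal shapes for even and odd $d$ are simultaneously accommodated, after settling the small base cases where $n>d$ guarantees the balls behave—is the main technical difficulty, and is precisely where the parity split in the statement comes from.
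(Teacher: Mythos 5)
A preliminary remark: the paper does not prove this statement at all --- Theorem~\ref{Kleitman} is quoted as background from Kleitman's 1966 paper, and the authors' own arguments (EKR-type intersection bounds, matchings in regular bipartite graphs, the orthogonal-complement isometry) are developed only for the $q$-analog, Theorem~\ref{main_thm}. So your attempt has to be judged on its own merits, and on those merits there is a genuine gap: what you have written is a correct reduction plus a plan, and the step you defer as ``the main technical difficulty'' is precisely the hard core of the theorem. To be clear about what does work: the compression operators $C_i$ are injective and do not increase diameter (your handling of the delicate case is right), the reduction to down-sets is sound, the split $\abs{\CF}=\abs{\CF_0}+\abs{\CF_1}$ with cross-diameter $\ls d-1$ is correct, and the recursions $F(n,d)=F(n-1,d)+F(n-1,d-2)$ and $F(n,2t+1)=2F(n-1,2t)$ for the claimed bound $F(n,d)$ are verified easily.

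The gap is that your strengthened two-family hypothesis does not propagate under the splitting you describe. Take nested down-sets $\CG\subseteq\CH$ in $2^{[m]}$ with $\diam(\CH)\ls d$ and cross-diameter $\ls d-1$, and split both on the last coordinate into $\CH_0\supseteq\CH_1$, $\CG_0\supseteq\CG_1$. The constraints you actually inherit are: for the pair $(\CH_0,\CG_0)$, diameter $\ls d$ and cross-diameter $\ls d-1$ (hypothesis at parameter $d$, bound $F(m,d)$); for the pair $(\CH_1,\CG_1)$, only $\diam(\CH_1)\ls d-1$ and cross-diameter $\ls d-2$ --- the \emph{diameter} of $\CH_1$ drops by one, not by two, and examples show $\diam(\CH_1)=d-1$ really occurs. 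So the best you can invoke on $(\CH_1,\CG_1)$ is the hypothesis at parameter $d-1$, giving $F(m,d-1)$, whereas closing the recursion requires $F(m,d-2)$; the overshoot is $F(m,d-1)-F(m,d-2)=\binom{m-1}{\fl{(d-1)/2}}$, and no re-pairing of the four descendant families avoids it (e.g.\ for $d=2t+1$, pairing the other way gives $F(m,2t+1)+F(m,2t)$, exceeding $F(m+1,2t+1)=F(m,2t+1)+F(m,2t-1)$ by $\binom{m-1}{t}$). In other words, the ``slack'' you hope the crossing pairs supply lives in the cross-diameter, but the recursion needs it in the diameter, and nothing in your setup transfers it. Closing this is exactly where every known proof needs a substantive new idea --- Kleitman's original averaging/partition argument, Frankl's more elaborate compression scheme, or the spectral proof of Huang, Klurman and Pohoata~\cite{MR4017966} cited in this paper --- so the proposal, as it stands, is an honest reduction of the problem, not a proof of it.
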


The bounds above are optimal. When $d=2t$, consider the Hamming ball of radius $t$, i.e.~$\CF=\sum_{i = 0}^{t}\binom{[n]}{i}$. When $d=2t+1$, consider $\CF=\sum_{i = 0}^{t}\binom{[n]}{i}\cup\{A\in\binom{[n]}{t+1}: 1\in A\}$.
The original proof of Kleitman is combinatorial (see also~\cite{MR465883} or \cite{MR3822342}). Recently, Huang, Klurman and Pohoata~\cite{MR4017966} gave a nice linear algebraic proof. Extensions of this theorem have been explored for the $n$-dimensional grid $[m]^n$ using Hamming distance \cite{MR1612850, MR593848}, as well as for $[m]^n$ and the $n$-dimensional torus $\mathbb{Z}_m^n$ with Manhattan distance \cite{MR1167471, MR1246666, MR1043716}. We refer the readers to~\cite{generalKleitman} more references and recent developments on Kleitman's theorem.

In this paper, we give a $q$-analog of Theorem \ref{Kleitman}. To state it, we need to generalize the Hamming graphs. Fix a prime power $q$ and let $V := \BF_q^n$ be the $n$-dimensional vector space over $\BF_q$. Denote by $\CV := \cb{\text{subspaces of } \BF_q^n}$ and $\CV(k) := \cb{A \in \CV: \dim A = k}$. The gaussian binomial coefficients record the cardinality of $\CV(k)$: $\abs{\CV(k)} = \qbinom{n}{k} := \frac{(q^n - 1)\cdots(q^{n - k + 1} - 1)}{(q^k - 1)\cdots(q - 1)}.$ The \emph{$n$-dimensional $q$-Hamming graph} has vertex set $\CV$, in which two subspaces $A$ and $B$ are joined by an edge if and only if $A \subseteq B$ and $\dim B - \dim A = 1$.
		
Our main result is an isodiametric inequality on $q$-Hamming graphs for large $n$.

\begin{theorem}\label{main_thm}
	
	Fix a prime power $q$. Let $n = d + 1$ or $n > 2d$ and let $G$ be the $n$-dimensional $q$-Hamming graph. Given $\CF \subseteq \CV$, if $\diam(\CF) \ls d$, then	
	\[\abs{\CF} \ls \left\{\begin{aligned}
		&\sum_{i = 0}^{t}\qbinom{n}{i}&\qquad&\text{if } d = 2t;\\
		&\sum_{i = 0}^{t}\qbinom{n}{i} + \qbinom{n - 1}{t}&\qquad&\text{if } d = 2t + 1.
	\end{aligned}\right.\]	
	
\end{theorem}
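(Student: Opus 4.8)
The plan is to first convert the diameter condition into linear-algebraic data. I would begin by establishing the distance formula
$$\delta_G(A,B)=\dim A+\dim B-2\dim(A\cap B)$$
for the $q$-Hamming graph: climbing from $A$ up to $A+B$ and then descending to $B$ (equivalently, descending to $A\cap B$ and climbing to $B$) realizes this value, and a rank/potential argument along any path in the graded lattice shows it cannot be beaten. Thus $\diam(\CF)\ls d$ becomes the requirement that $\dim A+\dim B-2\dim(A\cap B)\ls d$ for all $A,B\in\CF$. I would also record the distance-preserving automorphism $A\mapsto A^{\perp}$ given by any fixed nondegenerate form (one checks $\delta_G(A^\perp,B^\perp)=\delta_G(A,B)$), the analog of complementation in the cube, which lets me reflect high-dimensional members to low-dimensional ones. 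I would then split on the parity of $d$ and keep the two extremal families (the ball $\CV(0)\cup\cdots\cup\CV(t)$, and for odd $d$ the same together with the $(t+1)$-spaces through a fixed line) in view, since the bound must be calibrated exactly against them.

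For the boundary case $n=d+1$ the key observation is that the maximum distance in $\BF_q^n$ equals $n$, attained precisely by complementary pairs (trivial intersection, full sum); hence $\diam(\CF)\ls d=n-1$ says exactly that $\CF$ is an independent set in the complementarity graph $H$ on $\CV$, where $A\sim B$ iff $A\oplus B=\BF_q^n$. When $d=2t$, complementary subspaces have distinct dimensions $k$ and $n-k$, and for each $k\ls t$ the bipartite graph between $\CV(k)$ and $\CV(n-k)$ is regular with parts of equal size $\qbinom{n}{k}=\qbinom{n}{n-k}$, hence has a perfect matching; stacking these yields a perfect matching of $H$, so any independent set has size at most $\tfrac12\abs{\CV}=\sum_{i=0}^{t}\qbinom{n}{i}$. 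When $d=2t+1$ and $n=2t+2$, the same matching handles every dimension except the middle layer $\CV(t+1)$, where being complementary is the same as intersecting trivially; bounding $\CF\cap\CV(t+1)$ then reduces to the maximum intersecting family of $(t+1)$-spaces in $\BF_q^{2t+2}$, which by the $q$-Kneser theorem in the critical case $n=2k$ (provable by Hoffman's ratio bound) has size $\qbinom{2t+1}{t}$, exactly the extra term.

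For the generic range $n>2d$ the ball is strictly optimal, so I would run a polynomial / $q$-inclusion-matrix argument. Within a single Grassmannian $\CV(k)$ the function $X\mapsto q^{\dim(A\cap X)}$ is ``degree one'' (a fixed combination of the inclusion-indicators of lines), so for $A\in\CF$ the product $P_A(X)=\prod_{j=1}^{t}\bigl(q^{\dim(A\cap X)}-q^{\,k-j}\bigr)$ is a degree-$t$ function that vanishes on members at even distances $2,\dots,2t$ from $A$ and is nonzero at $X=A$; since all distances inside one Grassmannian are even, triangularity of evaluation forces the $P_A$ to be linearly independent, bounding $\abs{\CF\cap\CV(k)}$ by the dimension $\qbinom{n}{t}$ of the degree-$\ls t$ space (the hypothesis $n>2d$ keeps the relevant parameters in the range $t+k\ls n$ where the full-rank theorem for $q$-inclusion matrices applies). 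The remaining task is to assemble these per-dimension estimates into the single sum $\sum_{i\ls t}\qbinom{n}{i}$: using the parity of distances, one attaches to each member of dimension $k$ a degree-$t$ witness built from $i$-spaces with $i\ls t$, landing all of $\CF$ in one common space, and $n>2d$ is what makes the resulting global inclusion matrix have full column rank.

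I expect the main obstacle to be precisely this amalgamation step. The naive per-Grassmannian bounds sum to far more than $\sum_{i\ls t}\qbinom{n}{i}$, and the diameter constraint genuinely couples different dimensions, since a low-dimensional and a high-dimensional member can be forced close; so the layers cannot be treated independently. Engineering the witness construction to respect the whole lattice at once—so that the vectors assigned to all of $\CF$ occupy a common space of dimension exactly $\sum_{i\ls t}\qbinom{n}{i}$ while remaining linearly independent—is where the argument must do real work, and where $n>2d$ (rather than merely $n>d$) should be indispensable, matching the fact that for $d+1<n\ls 2d$ the ball is no longer extremal.
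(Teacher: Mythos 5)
Your treatment of the case $n=d+1$ is correct and is essentially the paper's own argument: you use the same regular bipartite ``complementarity'' graphs between $\CV(k)$ and $\CV(n-k)$, extract perfect matchings from regularity (Hall), note that matched pairs are at distance $n>d$, and handle the middle layer $\CV(t+1)$ for odd $d$ by the maximum-intersecting-family bound $\qbinom{n-1}{t}$ (the paper cites the Frankl--Wilson theorem, you invoke the $q$-Kneser/Hoffman bound; these give the same number here). The distance formula $\delta_G(A,B)=\dim A+\dim B-2\dim(A\cap B)$ and the $\perp$-isometry also both appear in the paper.

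The case $n>2d$, however, has a genuine gap, and you have identified it yourself: the amalgamation step does not exist in your proposal. Your per-Grassmannian polynomial bound is fine --- since distances within $\CV(k)$ are even, the products $P_A(X)=\prod_{j=1}^{t}\bigl(q^{\dim(A\cap X)}-q^{k-j}\bigr)$ are diagonal on $\CF(k)$ and live in the span of incidence indicators of subspaces of dimension at most $t$, giving $\abs{\CF(k)}\ls\qbinom{n}{t}$ --- but summing this uniform bound over the up to $d+1$ occupied layers gives roughly $(d+1)\qbinom{n}{t}$, far above the target. The sketched repair (attaching to each member a ``degree-$t$ witness'' so that all of $\CF$ maps injectively into a space of dimension $\sum_{i\ls t}\qbinom{n}{i}$) is not only unconstructed but cannot work as stated for odd $d$: the extremal family then has size $\sum_{i\ls t}\qbinom{n}{i}+\qbinom{n-1}{t}$, strictly larger than the dimension of your proposed common space, so no such injective linear-independence argument can exist; moreover cross-layer distances have varying parity, which breaks the vanishing-pattern trick that powers the single-layer bound. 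The paper avoids this entirely by a different mechanism: it uses the $\perp$-isometry to normalize $m_\CF\ls(n-d)/2$, then applies the Frankl--Wilson $s$-intersecting theorem to each slice to get the \emph{$k$-dependent} bound $\abs{\CF(k)}\ls\max\bigl\{\qbinom{n-k+t}{t},\qbinom{k+t}{t}\bigr\}$, which decays geometrically as $k$ moves away from $t$; summing these geometric tails (plus a separate argument bounding $\abs{\CF(t)}$ via nontrivial intersection with a fixed high-dimensional member, and ad hoc case analysis for $d\in\{2,3\}$ and $q=t=2$) yields the theorem. That $k$-sensitivity of the slice bounds is exactly what your uniform $\qbinom{n}{t}$ estimate lacks, and it is the missing ingredient you would need to complete your route.
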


The bounds on the size of $\CF$ above are optimal. Indeed, set $\CF_1 = \bigcup_{k = 0}^{t}\CV(k)$ and $\CF_2 = \CF_1 \cup \cb{A \in \CV(t + 1): x \in A}$ for some fixed $x \in V - \cb{0}$. Then $\CF_1$ attains the maximum bound if $d = 2t$ and $\CF_2$ attains the maximum bound if $d = 2t + 1$.

An equivalent formulation of~\cref{main_thm} is an isodiametric inequality on a metric space $(\CV,\Delta)$, see~\eqref{eq:dist} and~\cref{lem:metric}. We believe that the isodiametric inequality holds for all $n>d$. It would be interesting to bridge the gap in~\cref{main_thm} when $d+2\le n\le 2d$.
    
\section{Preliminaries}

We need the following result on subspaces counting.

\begin{lemma}[\cite{MR3497070}]\label{counting} Fix $A \in \CV(k)$. Then $\abs{\cb{B \in \CV(\ell): \dim (A \cap B) = j}} = q^{(k - j)(\ell - j)}\qbinom{n - k}{\ell - j}\qbinom{k}{j}.$
	
\end{lemma}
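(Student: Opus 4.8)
The plan is to count in two stages, first fixing the intersection $C := A \cap B$ and then counting the subspaces $B$ realizing it. Since $\dim(A\cap B)=j$ forces $C$ to be a $j$-dimensional subspace of $A$, and each $B$ with $\dim(A\cap B)=j$ determines $C$ uniquely, I would write the desired count as
\[
\sum_{C}\abs{\cb{B\in\CV(\ell): C\subseteq B,\ A\cap B=C}},
\]
where $C$ ranges over the $\qbinom{k}{j}$ subspaces in $\CV(j)$ contained in $A$. The key observation is that the inner count does not depend on the choice of $C$, so it suffices to evaluate it for one fixed $C$ and multiply by $\qbinom{k}{j}$.

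To evaluate the inner count for a fixed $j$-dimensional $C\subseteq A$, I would pass to the quotient $W:=\BF_q^n/C$, of dimension $n-j$, writing $\bar A:=A/C$ (of dimension $k-j$) and $\bar B:=B/C$. Because $C\subseteq A$ and $C\subseteq B$, the correspondence $B\mapsto\bar B$ is a bijection between $\cb{B\in\CV(\ell):C\subseteq B,\ A\cap B=C}$ and the set of $(\ell-j)$-dimensional subspaces $\bar B\subseteq W$ with $\bar B\cap\bar A=\cb{0}$; indeed, $A\cap B=C$ translates exactly into $(A/C)\cap(B/C)=\cb{0}$ in the quotient. This reduces the problem to the standard task of counting the $m$-dimensional subspaces of an $N$-dimensional space meeting a fixed $r$-dimensional subspace trivially, with $N=n-j$, $r=k-j$, and $m=\ell-j$.

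For this last count I would use an ordered-basis argument. A subspace $\bar B$ of dimension $m$ with $\bar B\cap\bar A=\cb{0}$ is spanned by any tuple $(v_1,\dots,v_m)$ that is linearly independent modulo $\bar A$, and at the $i$-th step the vector $v_i$ must avoid the $(r+i-1)$-dimensional space $\bar A+\an{v_1,\dots,v_{i-1}}$, giving $q^N-q^{r+i-1}$ choices. Dividing the resulting product $\prod_{i=0}^{m-1}(q^N-q^{r+i})$ by the number $\prod_{i=0}^{m-1}(q^m-q^i)$ of ordered bases of a fixed $m$-dimensional space, and factoring out the appropriate powers of $q$, collapses the expression to $q^{rm}\qbinom{N-r}{m}$. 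Substituting back $N-r=n-k$, $rm=(k-j)(\ell-j)$, and $m=\ell-j$ yields $q^{(k-j)(\ell-j)}\qbinom{n-k}{\ell-j}$ for the inner count, and multiplying by $\qbinom{k}{j}$ gives the claim. The only delicate point is the bookkeeping in the product simplification — verifying that the $q$-powers cancel to leave exactly $q^{(k-j)(\ell-j)}$ and that the remaining quotient of products is the Gaussian binomial $\qbinom{n-k}{\ell-j}$ — together with the routine but necessary check that the quotient map gives the claimed bijection.
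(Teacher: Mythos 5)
Your proof is correct, but note there is nothing in the paper to compare it against: the paper quotes this counting lemma from the literature (the reference \cite{MR3497070}) and gives no proof of its own, so your argument supplies a proof rather than parallels one. Your structure --- summing over the $\qbinom{k}{j}$ possible intersections $C \subseteq A$, passing to the quotient $\BF_q^n/C$ to reduce to counting $(\ell-j)$-dimensional subspaces meeting a fixed $(k-j)$-dimensional subspace trivially, and then the ordered-basis double count --- is the standard derivation, and every step checks out. In particular the quotient correspondence is a genuine bijection, since subspaces of $\BF_q^n$ containing $C$ correspond to subspaces of $\BF_q^n/C$ and, for $C \subseteq A\cap B$, one has $(A\cap B)/C = (A/C)\cap(B/C)$; and a tuple $(v_1,\dots,v_m)$ is linearly independent modulo $\bar A$ exactly when it is an ordered basis of an $m$-dimensional subspace meeting $\bar A$ trivially, so the double count is valid. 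The ``delicate'' power-of-$q$ bookkeeping you flag is indeed routine: with $N = n-j$, $r = k-j$, $m = \ell-j$,
\[
\frac{\prod_{i=0}^{m-1}\pt{q^{N}-q^{r+i}}}{\prod_{i=0}^{m-1}\pt{q^{m}-q^{i}}}
= \frac{q^{rm+\binom{m}{2}}\prod_{i=0}^{m-1}\pt{q^{N-r-i}-1}}{q^{\binom{m}{2}}\prod_{i=0}^{m-1}\pt{q^{m-i}-1}}
= q^{rm}\qbinom{N-r}{m}
= q^{(k-j)(\ell-j)}\qbinom{n-k}{\ell-j},
\]
which, multiplied by $\qbinom{k}{j}$, is exactly the claimed count.
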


Let $G$ be a finite bipartite graph with partite sets $X$ and $Y$ where $\abs{X} = \abs{Y}$. A \emph{perfect matching} is a set of disjoint edges which covers every vertex. For a subset $W \subseteq X$ (resp. $W \subseteq Y$), let $\Gamma(W)$ denote the set of all vertices in $Y$ (resp. $X$) that are adjacent to at least one vertex of $W$. Hall's theorem states that if for every subset $W \subseteq X$, $\abs{W} \ls \abs{\Gamma(W)}$, then $G$ has a perfect matching. We shall use the following folklore corollary. A \emph{regular graph} is a graph where each vertex has the same number of neighbors.

\begin{proposition}\label{coro_useful}
	
	Every finite regular bipartite graph has a perfect matching.
	
\end{proposition}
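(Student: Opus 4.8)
The plan is to prove that every finite regular bipartite graph has a perfect matching by verifying Hall's condition and then invoking Hall's theorem, which is stated in the excerpt. Let $G$ be a finite bipartite graph with partite sets $X$ and $Y$, and suppose every vertex has degree $r$ for some $r \gs 1$. First I would observe that $G$ must satisfy $\abs{X} = \abs{Y}$: counting the edges from each side gives $r\abs{X} = \abs{E(G)} = r\abs{Y}$, and since $r \gs 1$ we conclude $\abs{X} = \abs{Y}$, which is the balance condition required for a perfect matching to make sense.

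The heart of the argument is a double-counting estimate to check Hall's condition. Fix an arbitrary $W \subseteq X$ and consider the set of edges incident to $W$. Since each vertex of $W$ has degree exactly $r$, the number of such edges is exactly $r\abs{W}$. On the other hand, every one of these edges has its other endpoint in $\Gamma(W)$ by definition of the neighborhood, so each vertex of $\Gamma(W)$ receives at most $r$ of these edges (its total degree being $r$). Hence the number of edges incident to $W$ is at most $r\abs{\Gamma(W)}$. Combining the two counts yields
\[
r\abs{W} \ls r\abs{\Gamma(W)},
\]
and dividing by $r \gs 1$ gives $\abs{W} \ls \abs{\Gamma(W)}$. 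Since $W \subseteq X$ was arbitrary, Hall's condition holds, and Hall's theorem then furnishes a perfect matching, completing the proof.

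I do not expect any serious obstacle here: the entire argument is an elementary edge-counting bound, and the only mild subtlety is handling the degenerate case $r = 0$. If $r = 0$ the graph has no edges, so a perfect matching exists only when both partite sets are empty; in the intended application (and in the standard statement) one assumes $r \gs 1$, so I would simply note that the interesting case is $r \gs 1$ and that the division by $r$ is then valid. The cleanest presentation is to carry out the two-sided count of edges incident to $W$ inline, since it is short, rather than isolating it as a separate claim.
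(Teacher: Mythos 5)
Your proof is correct and follows exactly the route the paper intends: the paper states this proposition as a folklore corollary of Hall's theorem (giving no explicit proof), and your double-counting verification of Hall's condition, together with the edge count showing $\abs{X} = \abs{Y}$, is precisely the standard argument being invoked. The remark about the degenerate case $r = 0$ is a reasonable caveat and causes no issue in the paper's application, where the graphs are $q^{k(n-k)}$-regular with $q^{k(n-k)} \gs 1$.
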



\subsection{A metric space $(\CV,\Delta)$}

For $A, B \in \CV$, define $\Delta: \CV \times \CV \rightarrow \BN$ as
\begin{equation}\label{eq:dist}
   \Delta(A, B) := \dim(A + B)-\dim (A\cap B) =\dim A + \dim B - 2\dim(A \cap B).    
\end{equation}

The symmetric positive-definite function $\Delta$ is a metric on $\CV$.

\begin{lemma}\label{triangle_inequality}
	
	The function $\Delta$ satisfies the triangle inequality.
	
\end{lemma}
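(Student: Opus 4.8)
The plan is to verify the triangle inequality directly from the algebraic formula $\Delta(X,Y)=\dim X + \dim Y - 2\dim(X\cap Y)$, reducing it to a submodularity statement about the dimension function. Fix three subspaces $A,B,C\in\CV$; the goal is to show $\Delta(A,C)\ls\Delta(A,B)+\Delta(B,C)$. Substituting the formula and cancelling the common terms $\dim A+\dim C$ from both sides, this is equivalent to
\[
\dim(A\cap B)+\dim(B\cap C)\ls\dim B+\dim(A\cap C).
\]
So the entire lemma collapses to proving this single inequality.

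To establish it, I would apply the Grassmann (dimension) formula to the two subspaces $A\cap B$ and $B\cap C$, both of which lie inside $B$:
\[
\dim(A\cap B)+\dim(B\cap C)=\dim\bigl((A\cap B)+(B\cap C)\bigr)+\dim\bigl((A\cap B)\cap(B\cap C)\bigr).
\]
The sum $(A\cap B)+(B\cap C)$ is a subspace of $B$, hence has dimension at most $\dim B$; and the intersection $(A\cap B)\cap(B\cap C)$ equals $A\cap B\cap C$, which is a subspace of $A\cap C$ and therefore has dimension at most $\dim(A\cap C)$. Combining these two bounds yields exactly the required inequality, completing the reduction and hence the proof.

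I do not expect a serious obstacle here: once the cancellation in the first step is carried out, the statement is a direct consequence of the modularity of the subspace lattice. The only points requiring minor care are the identification $(A\cap B)\cap(B\cap C)=A\cap B\cap C\subseteq A\cap C$ and the containment $(A\cap B)+(B\cap C)\subseteq B$, both of which are immediate from the definitions of sum and intersection of subspaces. (One could alternatively observe that $\Delta$ coincides with the graph distance on the $q$-Hamming graph, for which the triangle inequality is automatic; but the direct computation above is cleaner and keeps this lemma self-contained, independent of~\cref{lem:metric}.)
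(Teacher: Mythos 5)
Your proof is correct and is essentially the same as the paper's: both rest on applying the Grassmann dimension formula to $A\cap B$ and $B\cap C$, together with the containments $(A\cap B)+(B\cap C)\subseteq B$ and $A\cap B\cap C\subseteq A\cap C$. The only difference is cosmetic --- you first cancel $\dim A+\dim C$ and prove the resulting submodularity-type inequality, whereas the paper runs the same three facts as a chain of inequalities starting from $\Delta(A,C)$.
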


\begin{proof}
	
	Recall the dimension formula from linear algebra, we have
	\begin{align*}
		\Delta(A, C)
		&\ls \dim A + \dim C - 2\dim (A \cap B \cap B \cap C) \\
		&= \dim A + \dim C - 2\dim(A \cap B) - 2\dim (B \cap C) + 2\dim((A \cap B) + (B \cap C)) \\
		&\ls \dim A + \dim C - 2\dim(A \cap B) - 2\dim(B \cap C) + 2\dim B\\
		&= \Delta(A, B) + \Delta(B, C).\qedhere
	\end{align*}	
	
\end{proof}

The following lemma shows that~\cref{main_thm} is equivalent to an isodiametric inequality on the metric space $(\CV,\Delta)$.

\begin{lemma}\label{lem:metric}
	
	Let $G$ be the $q$-Hamming graph. Then $\delta_G = \Delta$.
	
\end{lemma}

\begin{proof}
	
	Note that the union of an $A$-$(A \cap B)$ path and and $(A \cap B)$-$B$ path contains a path from $A$ to $B$. Thus, $\delta_G(A, B) \ls \delta_G(A,A\cap B)+\delta_G(A\cap B, B)=\Delta(A, B)$. 
    
    We shall prove the converse  $\Delta(A, B) \le \delta_G(A, B)$ by induction on $k=\delta_G(A,B)$. The cases $k \in \cb{0, 1}$ are trivial. Suppose now that $\delta_G(A, B) = k\ge 2$. Then there exists a vertex $C$ on the shortest path from $A$ to $B$, such that $\delta_G(A, C) = 1$ and $\delta_G(B, C) = k - 1$. By Lemma \ref{triangle_inequality} and by the induction hypothesis, we have $\Delta(A, B) \ls \Delta(A, C) + \Delta(B, C) = \delta_G(A, C) + \delta_G(B, C) = k. $
\end{proof}

For $\CF, \CG \subseteq \CV$, we say $(\CF, \CG)$ is \emph{cross}-$s$-\emph{intersecting in $V$} if $\dim(A \cap B) \gs s$ for any $A \in \CF$ and any $B \in \CG$. In particular, if $\CF = \CG$, then $\CF$ is $s$-\emph{intersecting in $V$}.

The following lemma follows immediately from~\eqref{eq:dist}.

\begin{lemma}\label{lem:equivform}
	
	Let $\CF \subseteq \CV$ with $\diam(\CF) = d$. Then $(\CF(i), \CF(j))$ is cross-$\ce{\pt{i + j - d}/2}$-intersecting in $V$ for any $i, j \in \supp(\CF)$.
	In particular, $\CF(k)$ is $(k - \fl{d/2})$-intersecting in $V$ for any $k \in \supp(\CF)$.
	
\end{lemma}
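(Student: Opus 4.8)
The plan is to read everything off directly from the distance formula~\eqref{eq:dist} together with~\cref{lem:metric}, which is exactly why the paper flags this as immediate. Fix $i, j \in \supp(\CF)$ and take arbitrary $A \in \CF(i)$ and $B \in \CF(j)$. Since $A, B \in \CF$ and $\diam(\CF) = d$, we have $\delta_G(A, B) \ls d$; and~\cref{lem:metric} identifies $\delta_G$ with $\Delta$, so $\Delta(A, B) \ls d$. Substituting $\dim A = i$ and $\dim B = j$ into~\eqref{eq:dist} gives $i + j - 2\dim(A \cap B) \ls d$, hence $\dim(A \cap B) \gs (i + j - d)/2$. As $\dim(A \cap B)$ is an integer, it is at least $\ce{(i + j - d)/2}$. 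Since $A$ and $B$ were arbitrary, this shows that $(\CF(i), \CF(j))$ is cross-$\ce{(i + j - d)/2}$-intersecting in $V$, which is the first assertion.

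For the second assertion I would simply specialize to $i = j = k$: the first part then says that $\CF(k)$ is $\ce{(2k - d)/2}$-intersecting in $V$. It remains to rewrite the threshold in the stated form using the elementary identity $\ce{k - d/2} = k - \fl{d/2}$, valid for any integer $k$, which converts $\ce{(2k-d)/2}$ into $k - \fl{d/2}$.

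There is essentially no obstacle here: the work has already been done in establishing~\cref{lem:metric}, which converts the diameter hypothesis into a uniform lower bound on pairwise intersection dimensions. The only points requiring a moment's care are the passage from the rational bound $(i + j - d)/2$ to its integer ceiling, and the small arithmetic identity $\ce{k - d/2} = k - \fl{d/2}$ used to match the stated form of the special case.
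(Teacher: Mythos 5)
Your proof is correct and is exactly the verification the paper has in mind when it says the lemma ``follows immediately from~\eqref{eq:dist}'': unfold $\Delta(A,B)\ls d$ via \cref{lem:metric}, solve for $\dim(A\cap B)$, and use integrality plus $\ce{k-d/2}=k-\fl{d/2}$ for the special case. No issues.
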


\begin{theorem}[\cite{MR867648}]\label{EKRVEC}
	Let $n \gs 2k - s$ and $\CF \subseteq \CV(k)$. If $\CF$ is $s$-intersecting in $V$, then
	\[\abs{\CF} \ls \max\cb{\qbinom{n - s}{k - s}, \qbinom{2k - s}{k - s}}.\]
	
\end{theorem}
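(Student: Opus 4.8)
This is the $q$-analog of the complete Erd\H{o}s--Ko--Rado theorem for $s$-intersecting families, and the two terms of the maximum are the sizes of the two natural extremal families. The \emph{$s$-star} $\CS=\cb{A\in\CV(k):S\subseteq A}$ determined by a fixed $S\in\CV(s)$ is $s$-intersecting and has size $\qbinom{n-s}{k-s}$, while the \emph{ball} $\CB=\cb{A\in\CV(k):A\subseteq W}$ determined by a fixed $W\in\CV(2k-s)$ is $s$-intersecting (any two members meet in dimension at least $2k-(2k-s)=s$) and has size $\qbinom{2k-s}{k}=\qbinom{2k-s}{k-s}$. The plan is to reformulate the assertion as a bound on an independence number: let $\CK$ be the graph on $\CV(k)$ in which $A\sim B$ iff $\dim(A\cap B)\ls s-1$, so that the $s$-intersecting families are exactly the independent sets of $\CK$, and it suffices to prove $\abs{\CF}\ls\max\cb{\qbinom{n-s}{k-s},\qbinom{2k-s}{k-s}}$ for every independent set $\CF$ of $\CK$.

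My main tool would be the spectral (Hoffman ratio) bound inside the Grassmann association scheme. The graph $\CK$ is the union over $0\ls j\ls s-1$ of the scheme relations $\cb{(A,B):\dim(A\cap B)=j}$, so it is regular and its adjacency matrix is a fixed polynomial in the scheme's single generator; hence its eigenvalues can be read off from the known ($q$-Hahn) eigenvalues of the Grassmann scheme. Its valency is computed directly from \cref{counting} with $\ell=k$, giving $d=\sum_{j=0}^{s-1}q^{(k-j)^2}\qbinom{n-k}{k-j}\qbinom{k}{j}$. Inserting $d$ and the least eigenvalue $\tau$ of $\CK$ into the ratio bound $\abs{\CF}\ls\frac{-\tau}{d-\tau}\qbinom{n}{k}$ should, after simplifying the resulting $q$-binomial identities, collapse to exactly $\qbinom{n-s}{k-s}$ throughout the regime in which the star dominates; for $s=1$ this recovers the classical spectral proof of the $q$-Kneser/EKR theorem for $n\gs 2k$.

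The hard part is twofold. First, the ratio bound is sharp only while the star is extremal; near the lower end $n=2k-s$ the ball wins and essentially all of $\CV(k)$ is itself $s$-intersecting, so this boundary regime needs a separate treatment, for instance a direct shadow/counting argument showing that an $s$-intersecting family not contained in a single $(2k-s)$-space cannot exceed $\qbinom{2k-s}{k-s}$. Second, and more essentially, one must exclude the $q$-analogs of the Ahlswede--Khachatrian intermediate families $\cb{A\in\CV(k):\dim(A\cap U)\gs s+i}$ with $U\in\CV(s+2i)$ and $0<i<k-s$: the very fact that the answer is a maximum of only \emph{two} quantities asserts that, for $n\gs 2k-s$, none of these intermediate configurations ever beats both the star and the ball, and establishing this dichotomy (rather than a full Ahlswede--Khachatrian spectrum) is where the main effort goes. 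Should the ratio bound fail to be tight at the crossover, I would instead induct on $s$, partitioning $\CF$ by whether its members contain a fixed element and applying the $s=1$ bound to the resulting link, or adapt the original combinatorial argument of \cite{MR867648}.
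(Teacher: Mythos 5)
You have picked a statement that the paper does not prove at all: \cref{EKRVEC} is imported as a black box from Frankl and Wilson \cite{MR867648}, so there is no internal proof to compare against, and your attempt must be judged on its own merits. Judged that way, it is a research plan rather than a proof, and its pivotal step is asserted rather than established. You claim that inserting the valency and the least eigenvalue of the union graph $\CK$ into the Hoffman ratio bound ``should, after simplifying, collapse to exactly $\qbinom{n-s}{k-s}$''. For $s=1$ this is indeed the classical spectral proof, but for $s\gs 2$ the unweighted union of the relations $\cb{(A,B):\dim(A\cap B)\ls s-1}$ is precisely the graph for which the plain ratio bound is not known to be tight; in the analogous Johnson scheme it demonstrably fails, which is why Wilson's exact bound for $t$-intersecting set families required constructing a carefully weighted pseudo-adjacency matrix rather than using the union of relations. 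Checking tightness (or finding the right weighting) in the Grassmann scheme is not a routine simplification of $q$-binomial identities --- it, together with the two items you yourself flag as ``the hard part'' (the crossover regime near $n=2k-s$ and the exclusion of the intermediate families $\cb{A:\dim(A\cap U)\gs s+i}$), \emph{is} the content of the theorem, and all of it is left open in your proposal.

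Your fallbacks do not close the gap either. Inducting on $s$ via the link of a fixed one-dimensional subspace $x$ only bounds the members containing $x$: for those, quotienting by $x$ gives an $(s-1)$-intersecting family of $(k-1)$-spaces in an $(n-1)$-space, and the inductive bound $\qbinom{(n-1)-(s-1)}{(k-1)-(s-1)}=\qbinom{n-s}{k-s}$ is the right number, but the entire difficulty lies in controlling the members that avoid $x$, about which the proposal says nothing. The final fallback --- ``adapt the original combinatorial argument of \cite{MR867648}'' --- is circular as a proof attempt; it amounts to citing the theorem, which is exactly what the paper does. In short, you correctly identify the two extremal configurations (star and ball, including the symmetry $\qbinom{2k-s}{k}=\qbinom{2k-s}{k-s}$) and a plausible toolkit, but the proposal does not contain a proof of the statement.
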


\subsection{An isometry}

Recall that, given two metric spaces $(X_1, d_1)$ and $(X_2, d_2)$, a bijection $f: X_1 \rightarrow X_2$ is said to be an \emph{isometry} if $d_2(f(x), f(y)) = d_1(x, y)$ for any $x, y \in X_1$. Next we show that taking orthogonal complement is an isometry on $(\CV,\Delta)$. If $W$ is a subspace of $V$, then its \emph{orthogonal complement} $W^\perp$ is the subspace of $V$ consisting of all elements $v \in V$ such that $\an{v, w} = 0$ for all $w \in W$, where $\an{v, w} := \sum\limits_{i = 1}^{n}v_i \cdot w_i$. 

\begin{lemma}\label{lem:isometry}
	
	The bijection $f(U) := U^\perp$ is an isometry on $(\CV, \Delta)$.	
	
\end{lemma}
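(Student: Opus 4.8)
The plan is to verify the two defining conditions separately: that $f$ is a bijection, and that it preserves $\Delta$. The whole argument rests on the fact that the bilinear form $\an{\cdot,\cdot}$ on $\BF_q^n$ is non-degenerate, i.e.~$\an{v,w}=0$ for all $w\in V$ forces $v=0$ (take $w$ ranging over the standard basis). From non-degeneracy I would extract the three standard facts I need: $\dim U^\perp = n - \dim U$ for every $U\in\CV$; the involution law $(U^\perp)^\perp = U$; and the De Morgan-type identities
\begin{equation*}
    (U + W)^\perp = U^\perp \cap W^\perp \qquad\text{and}\qquad (U \cap W)^\perp = U^\perp + W^\perp.
\end{equation*}

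The first of these is a routine rank computation for the linear system cutting out $U^\perp$; the involution law then follows by a dimension count, since $U\subseteq (U^\perp)^\perp$ always and both spaces have dimension $\dim U$. The first De Morgan identity is immediate from the definition ($v$ is orthogonal to $U+W$ if and only if it is orthogonal to both $U$ and $W$), and the second follows from the first by applying it to $U^\perp, W^\perp$ and taking complements via the involution law. With the involution law in hand, $f$ is its own inverse and hence a bijection.

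For the isometry property I would use the first form of $\Delta$ in~\eqref{eq:dist}, namely $\Delta(A,B)=\dim(A+B)-\dim(A\cap B)$. Applying the De Morgan identities and then the rule $\dim X^\perp = n-\dim X$ gives
\begin{align*}
    \Delta(U^\perp, W^\perp)
    &= \dim(U^\perp + W^\perp) - \dim(U^\perp \cap W^\perp) \\
    &= \dim\big((U\cap W)^\perp\big) - \dim\big((U + W)^\perp\big) \\
    &= \big(n - \dim(U\cap W)\big) - \big(n - \dim(U + W)\big) \\
    &= \dim(U + W) - \dim(U\cap W) = \Delta(U, W),
\end{align*}
as required.

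The one genuine subtlety --- and the only place where the finite-field setting departs from the familiar real-inner-product picture --- is that over $\BF_q$ the form $\an{\cdot,\cdot}$ can be isotropic, so in general $U\cap U^\perp\ne\cb{0}$ and there is no orthogonal direct-sum decomposition $V=U\oplus U^\perp$. I want to stress that none of the facts above actually uses such a decomposition: every identity I rely on is a consequence of non-degeneracy alone, together with elementary dimension counting, so the argument goes through verbatim regardless of isotropy. That is the point to watch when writing out the details.
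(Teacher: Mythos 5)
Your proof is correct and follows essentially the same route as the paper: the paper's proof is exactly your final display, computing $\Delta(U^\perp,W^\perp)$ via the identities $(U+W)^\perp=U^\perp\cap W^\perp$, $(U\cap W)^\perp=U^\perp+W^\perp$ and $\dim X^\perp=n-\dim X$, which it takes for granted. Your additional justification of these duality facts from non-degeneracy, and your remark that isotropy of the form over $\BF_q$ is harmless since no orthogonal decomposition is ever used, are accurate and simply make explicit what the paper leaves implicit.
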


\begin{proof}
	
	Take arbitrary $U_1,U_2\in\CV$. Note that
	\begin{align*}
		\Delta(U_1^\perp, U_2^\perp)
		&= \dim(U_1^\perp + U_2^\perp) - \dim(U_1^\perp \cap U_2^\perp)= \dim((U_1 \cap U_2)^\perp) - \dim((U_1 + U_2)^\perp)\\
		&= n - \dim(U_1 \cap U_2) - n + \dim (U_1 + U_2)= \Delta(U_1, U_2).\qedhere
	\end{align*}	
	
\end{proof}

Given $\CF \subseteq \CV$, denote by $\diam(\CF) := \max_{A, B \in \CF}\Delta(A, B)$ its diameter and
	\[D(\CF) := \max\limits_{A, B \in \CF}\abs{\dim A - \dim B}.\]
	It follows from definition that $\abs{\dim A - \dim B} \ls \Delta(A, B)$, and so $D(\CF) \ls \diam(\CF)$. We write $\CF(i)\subseteq\CF$ for the set of subspaces of dimension $i$ in $\CF$. Let $\supp(\CF) := \cb{i \in \BN: \CF(i) \ne \varnothing}$ and $\CF^\perp := \cb{W^\perp: W \in \CF}$. If $\diam(\CF) = d$, define
	\[m_\CF := \min\cb{x \in \BN: \supp(\CF) \subseteq [x, x + d] \quad \text{ or } \quad \supp(\CF^\perp) \subseteq [x, x + d]}.\]

\begin{lemma}\label{lem:min}
	
	If $\diam(\CF) = d$, then $m_\CF \ls \pt{n - d}/2$.	
	
\end{lemma}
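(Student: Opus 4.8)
The plan is to reduce the whole statement to the two extreme dimensions occurring in $\CF$. Write $a := \min \supp(\CF)$ and $b := \max \supp(\CF)$, so that $\supp(\CF) \subseteq [a, b]$ with $0 \ls a \ls b \ls n$. The crucial inequality is $b - a = D(\CF) \ls \diam(\CF) = d$, recorded just before the statement. I would also use that taking orthogonal complements reverses dimensions: since $\dim W^\perp = n - \dim W$, we have $\supp(\CF^\perp) = \cb{n - i : i \in \supp(\CF)} \subseteq [n - b, n - a]$, an interval of the same length $b - a$. Finally, note $d = \diam(\CF) \ls n$, since $\Delta(A,B) = \dim(A + B) - \dim(A \cap B) \ls n$ for all $A, B$, so that $(n - d)/2 \gs 0$.

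Next I would exhibit, for each of the two alternatives defining $m_\CF$, an explicit admissible shift $x$. For $\supp(\CF) \subseteq [a, b]$, the value $x = \max\cb{0, b - d}$ satisfies $\supp(\CF) \subseteq [x, x + d]$: indeed $x + d \gs b$ by construction, while $x \ls a$ follows from $b - d \ls a$ (equivalently $b - a \ls d$) together with $a \gs 0$. Symmetrically, for $\supp(\CF^\perp) \subseteq [n - b, n - a]$ the value $x = \max\cb{0, n - a - d}$ works: here $x + d \gs n - a$ is immediate, and $x \ls n - b$ follows from $n - a - d \ls n - b$ (again $b - a \ls d$) together with $b \ls n$. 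Since $m_\CF$ is the minimum over both alternatives, this gives
\[
m_\CF \ls \min\cb{\max\cb{0, b - d},\ \max\cb{0, n - a - d}}.
\]

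It then remains to bound this minimum. If $b \ls d$ or $a \gs n - d$, one of the two candidates is $0$, and since $n \gs d$ we obtain $m_\CF = 0 \ls (n - d)/2$. Otherwise both candidates are strictly positive, and averaging the two yields
\[
m_\CF \ls \frac{(b - d) + (n - a - d)}{2} = \frac{n + (b - a) - 2d}{2} \ls \frac{n - d}{2},
\]
where the final step uses $b - a \ls d$. This exhausts all cases.

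The one point requiring care---and the reason the argument is not completely immediate---is the choice of shifts. The naive choices $x = a$ (for $\CF$) and $x = n - b$ (for $\CF^\perp$) give only $m_\CF \ls \min\cb{a, n - b} \ls (n - D(\CF))/2$, which is too weak whenever the realized spread $D(\CF) = b - a$ is strictly smaller than $d$. The fix is to slide each interval as far toward $0$ as the length-$d$ window permits, i.e. to use $\max\cb{0, b - d}$ and $\max\cb{0, n - a - d}$; this exploits the full slack $d$ rather than merely $D(\CF)$, and it is exactly this sliding that makes the averaging step land on $(n - d)/2$.
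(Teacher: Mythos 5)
Your proof is correct and takes essentially the same route as the paper: both arguments rest on the duality $\supp(\CF^\perp) = \cb{n - i : i \in \supp(\CF)}$, producing admissible window-shifts for $\CF$ and for $\CF^\perp$ whose sum is at most $n - d$, so that the smaller of the two is at most $(n-d)/2$. The only difference is presentational --- the paper dualizes the minimal shift $y$ for $\CF$ directly, whereas you construct both shifts explicitly from $\min\supp(\CF)$ and $\max\supp(\CF)$ via $D(\CF) \ls d$, which also makes explicit the nonnegativity edge cases that the paper's one-line argument glosses over.
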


\begin{proof}
	
	It suffices to show that, if $y := \min\cb{x \in \BN: \supp(\CF) \subseteq [x, x + d]} > \pt{n - d}/2,$
	then we must have
	$\min\cb{x \in \BN: \supp\pt{\CF^\perp} \subseteq [x, x + d]} \ls (n - d)/2$.	
	But this follows from
	\[\supp(\CF) \subseteq [y, y + d] \quad \Rightarrow \quad \supp(\CF^\perp) \subseteq [n - d - y, n - y],\]
	where $n - d - y < n - d - (n - d)/2 = \pt{n - d}/2$.\qedhere
	
\end{proof}

By~\cref{lem:isometry,lem:min}, we may assume that
\begin{equation}\label{eq:min-dim}
	m_{\CF}=\min\cb{x \in \BN: \supp(\CF) \subseteq [x, x + d]} \ls \pt{n - d}/2.
\end{equation}


	\section{Proof of Theorem \ref{main_thm}}
	
Let us first consider the case $n = d + 1$. For each $k \ls n/2$, define a bipartite graph $G_k := \pt{\CV(k) \uplus \CV(n - k), \CE_k}$, where
\[\CE_k = \cb{\cb{A, B}: A \in \CV(k), B \in \CV(n - k) \text{ and } \dim(A \cap B) = 0}.\]
By Lemma \ref{counting}, we know that $G_k$ is ${q^{k(n - k)}}$-regular. By~\cref{coro_useful}, there is a perfect matching in $G_k$. Note that if $\cb{A, B}$ is an edge in $G_k$, then $\Delta(A, B) > d$. Thus for any $k \ls n/2$, we have
\[\abs{\CF(k)} + \abs{\CF(n - k)} \ls \qbinom{n}{k}.\]
Hence, if $d = 2t$, we have
\[\abs{\CF} = \sum_{i = 0}^{t}\pt{\abs{\CF(i)} + \abs{\CF(n - i)}} \ls \sum_{i = 0}^{t}\qbinom{n}{i}.\]
If $d = 2t + 1$, then $\CF(t + 1)$ must be $1$-intersecting in $V$. By Theorem \ref{EKRVEC}, we have $\abs{\CF(t + 1)} \ls \qbinom{n - 1}{t}$. So
\[\abs{\CF} = \sum_{i = 0}^{t}\pt{\abs{\CF(i)} + \abs{\CF(n - i)}} + \abs{\CF(t + 1)} \ls \sum_{i = 0}^{t}\qbinom{n}{i} + \qbinom{n - 1}{t}.\]

We may now assume $n>2d$. The cases $d \in \cb{0, 1}$	are trivial; assume then $d\ge 2$. Set $t := \fl{d/2} \gs 1$. We shall bound the size of each layer of $\CF$. By~\cref{lem:equivform}, for any $k \in \supp(\CF)$, $\CF(k)$ is a $(k - t)$-intersecting family. Hence by~\cref{EKRVEC}, we have
\begin{equation}\label{eq:slice}
 \abs{\CF(k)} \ls \max\cb{\qbinom{n - k + t}{t}, \qbinom{k + t}{t}}.    
\end{equation}

Let $m=m_{\CF}$. Then by~\eqref{eq:min-dim}, $m\le \frac{n-d}{2}$. Our strategy is to bound each of the slices $\CF(k)$. The cases when $d$ and $q$ are small require separate treatments.

\subsection{The case $d = 2$}

In this case, we have $t := \fl{d/2} = 1$ and $D(\CF)\le d=2$.
	
Suppose $D(\CF) = 0$, i.e., $\supp(\CF) = \cb{m}$. As $m\le \frac{n-d}{2}$, by~\eqref{eq:slice} we have
	\begin{equation}\label{eq:D0}
		\abs{\CF}=\abs{\CF(m)} \ls \max\cb{\qbinom{n - m + 1}{1}, \qbinom{m + 1}{1}} = \qbinom{n - m + 1}{1}\ls 1+\qbinom{n}{1},
	\end{equation}
	as desired.
    
Suppose then $D(\CF) = 1$, i.e., $\supp(\CF) = \cb{m, m + 1}$. By Lemma \ref{lem:equivform}, we have $(\CF(m), \CF(m + 1))$ is cross-$m$-intersecting in $V$. In other words, we have $X \subseteq Y$ for any $X \in \CF(m)$ and any $Y \in \CF(m + 1)$. By Lemma \ref{lem:equivform} again, we have $\CF(m)$ is $(m - 1)$-intersecting in $Y$ for any $Y \in \CF(m + 1)$.
	
Now, if $\abs{\CF(m + 1)} = 1$, i.e., $\CF(m + 1) = \cb{Y}$ for some $Y\in \CV(m+1)$, then by Theorem \ref{EKRVEC} with $V=Y$, we have
		\[\abs{\CF(m)} \ls \max\cb{\qbinom{2}{1}, \qbinom{m + 1}{1}} = \qbinom{m + 1}{1}.\]
	Thus, $\abs{\CF}\ls 1+\qbinom{m + 1}{1}\ls 1+\qbinom{n}{1}$.
		
	If instead $\abs{\CF(m + 1)} \gs 2$, i.e., $\CF(m + 1) = \cb{Y_1, Y_2, \ldots}$. Set $Z := Y_1 \cap Y_2$ and thus $\dim Z \ls m$. Hence $\abs{\CF(m)} \ls 1$. On the other hand, by~\eqref{eq:slice}, we have
		\[\abs{\CF(m + 1)} \ls \max\cb{\qbinom{n - m}{1}, \qbinom{m + 2}{1}} = \qbinom{n - m}{1}.\]
     Thus,  $\abs{\CF}\ls 1+\qbinom{n-m }{1}\ls 1+\qbinom{n}{1}$.  
			
Suppose $D(\CF) = 2$, i.e., $\supp(\CF) = \cb{m, m + 1, m + 2}$. By Lemma \ref{lem:equivform}, we have $(\CF(m), \CF(m + 2))$ is cross-$m$-intersecting in $V$ and $(\CF(m + 1), \CF(m + 2))$ is cross-$(m + 1)$-intersecting in $V$ respectively. In other words, we have $X \subseteq Y$ for any $X \in \CF(m) \cup \CF(m + 1)$ and any $Y \in \CF(m + 2)$. By Lemma \ref{lem:equivform} again, we have $\CF(m)$ is $(m - 1)$-intersecting in $Y$ for any $Y \in \CF(m + 2)$ and $\CF(m + 1)$ is $m$-intersecting in $Y$ for any $Y \in \CF(m + 2)$ respectively. 
	
If $\abs{\CF(m + 2)} = 1$, say $\CF(m + 2) = \cb{Y}$, then By Theorem \ref{EKRVEC}, we have
		\[\abs{\CF(m)} \ls \max\cb{\qbinom{3}{1}, \qbinom{m + 1}{1}},\]
		and
		\[\abs{\CF(m + 1)} \ls \max\cb{\qbinom{2}{1}, \qbinom{m + 2}{1}} = \qbinom{m + 2}{1}.\]

		If instead $\abs{\CF(m + 2)} \gs 2$, i.e., $\CF(m + 2) = \cb{Y_1, Y_2, \ldots}$. Set $Z := Y_1 \cap Y_2$ and thus $\dim Z \ls m + 1$. Then $\abs{\CF(m + 1)} \ls 1$ and by Theorem \ref{EKRVEC}, we have
		\[\abs{\CF(m)} \ls \max\cb{\qbinom{2}{1}, \qbinom{m + 1}{1}} = \qbinom{m + 1}{1}.\]
	By~\eqref{eq:slice}, we have
		\[\abs{\CF(m + 2)} \ls \max\cb{\qbinom{n - m - 1}{1}, \qbinom{m + 3}{1}}.\]

In each case, thanks to $\qbinom{n}{1} > \sum\limits_{i = 1}^{n - 1}\qbinom{i}{1}$, we have
\[\abs{\CF} = \sum_{k \in \supp(\CF)}\abs{\CF(k)} \ls 1 + \qbinom{n}{1}.\]
	
\subsection{The case $d = 3$}

In this case, we have $t := \fl{d/2} = 1$ and $D(\CF)\le d=3$.

If $D(\CF) = 0$, as in~\eqref{eq:D0}, we have
	\begin{equation*}
		\abs{\CF} \ls \max\cb{\qbinom{n - m + 1}{1}, \qbinom{m + 1}{1}} = \qbinom{n - m + 1}{1}\ls 1+\qbinom{n}{1}+\qbinom{n-1}{1}.
	\end{equation*}
	
If $D(\CF) = 1$, i.e.~$\supp(\CF) = \cb{m, m + 1}$. By~\eqref{eq:slice}, we have
	\[\abs{\CF(m)} \ls \max\cb{\qbinom{n - m + 1}{1}, \qbinom{m + 1}{1}} = \qbinom{n - m + 1}{1},\]
	and
	\[\abs{\CF(m + 1)} \ls \max\cb{\qbinom{n - m}{1}, \qbinom{m + 2}{1}} = \qbinom{n - m}{1}.\]
	Thus, $\abs{\CF}\ls\qbinom{n - m + 1}{1}+\qbinom{n - m }{1} \ls 1+\qbinom{n}{1}+\qbinom{n-1}{1}$.
	
Suppose $D(\CF) = 2$, i.e., $\supp(\CF) =\cb{m, m + 1, m + 2}$. Then by Lemma \ref{lem:equivform}, we have $(\CF(m), \CF(m + 2))$ is cross-$m$-intersecting in $V$ and $\CF(m + 1)$ is $m$-intersecting in $V$ respectively. In other words, we have $X \subseteq Y$ for any $X \in \CF(m)$ and any $Y \in \CF(m + 2)$. By Lemma \ref{lem:equivform} again, we have $\CF(m)$ is $(m - 1)$-intersecting in $Y$ for any $Y \in \CF(m + 2)$. By~\eqref{eq:slice},
		\[\abs{\CF(m + 1)} \ls \max\cb{\qbinom{n - m}{1}, \qbinom{m + 2}{1}} = \qbinom{n - m}{1}.\]
	
	If $\abs{\CF(m + 2)} = 1$, say $\CF(m + 2) = \cb{Y}$, then by Theorem \ref{EKRVEC}, we have
		\[\abs{\CF(m)} \ls \max\cb{\qbinom{3}{1}, \qbinom{m + 1}{1}}.\]
		
	If $\abs{\CF(m + 2)} \gs 2$, i.e., $\CF(m + 2) = \cb{Y_1, Y_2, \ldots}$. Set $Z := Y_1 \cap Y_2$ and thus $\dim Z \ls m + 1$. By Theorem \ref{EKRVEC} with $V_{\ref{EKRVEC}}=Z$, we have
		\[\abs{\CF(m)} \ls \max\cb{\qbinom{2}{1}, \qbinom{m + 1}{1}} = \qbinom{m + 1}{1},\]
		and by~\eqref{eq:slice},
		\[\abs{\CF(m + 2)} \ls \max\cb{\qbinom{n - m - 1}{1}, \qbinom{m + 3}{1}}.\]
In each case, thanks to $\qbinom{n}{1} > \sum\limits_{i = 1}^{n - 1}\qbinom{i}{1}$, we have
\[\abs{\CF} = \sum_{k \in \supp(\CF)}\abs{\CF(k)} \ls 1 + \qbinom{n}{1} + \qbinom{n - 1}{1}.\]
	
	Suppose $D(\CF) = 3$, i.e., $\supp(\CF) = \cb{m, m + 1, m + 2, m + 3}$. By Lemma \ref{lem:equivform}, we have $(\CF(m), \CF(m + 3))$ is cross-$m$-intersecting in $V$ and $(\CF(m + 1), \CF(m + 3))$ is cross-$(m + 1)$-intersecting in $V$ respectively. In other words, we have $X \subseteq Y$ for any $X \in \CF(m) \cup \CF(m + 1)$ and any $Y \in \CF(m + 3)$. By Lemma \ref{lem:equivform} again, we have $\CF(m)$ is $(m - 1)$-intersecting in $Y$ for any $Y \in \CF(m + 3)$ and $\CF(m + 1)$ is $m$-intersecting in $Y$ for any $Y \in \CF(m + 3)$ respectively. By~\eqref{eq:slice},
		\[\abs{\CF(m + 2)} \ls \max\cb{\qbinom{n - m - 1}{1}, \qbinom{m + 3}{1}}.\]
	
	If $\abs{\CF(m + 3)} = 1$, say $\CF(m + 3) = \cb{Y}$, then by Theorem \ref{EKRVEC} with $V_{\ref{EKRVEC}}=Y$, we have
		\[\abs{\CF(m)} \ls \max\cb{\qbinom{4}{1}, \qbinom{m + 1}{1}},\]
		and
		\[\abs{\CF(m + 1)} \ls \max\cb{\qbinom{3}{1}, \qbinom{m + 2}{1}}.\]
		
		If $\abs{\CF(m + 3)} \gs 2$, i.e., $\CF(m + 3) = \cb{Y_1, Y_2, \ldots}$. Set $Z := Y_1 \cap Y_2$, thus $\dim Z \ls m + 2$. By Theorem \ref{EKRVEC} with $V_{\ref{EKRVEC}}=Z$, we have
		\[\abs{\CF(m)} \ls \max\cb{\qbinom{3}{1}, \qbinom{m + 1}{1}},\]
		and
		\[\abs{\CF(m + 1)} \ls \max\cb{\qbinom{2}{1}, \qbinom{m + 2}{1}} = \qbinom{m + 2}{1},\]
		and by~\eqref{eq:slice},
		\[\abs{\CF(m + 3)} \ls \max\cb{\qbinom{n - m - 2}{1}, \qbinom{m + 4}{1}}.\]

In each case, thanks to $\qbinom{n}{1} > \sum\limits_{i = 1}^{n - 1}\qbinom{i}{1}$, we have
\[\abs{\CF} = \sum_{k \in \supp(\CF)}\abs{\CF(k)} \ls 1 + \qbinom{n}{1} + \qbinom{n - 1}{1}.\]
	
\subsection{The cases $d \gs 4$}

In these cases, we have $t := \fl{d/2} \gs 2$. Let us first assume that $m = m_\CF \gs t + 1$. Note that $\abs{\CF} = \sum_{k = m}^{\fl{n/2}}\abs{\CF(k)} + \sum_{k = \fl{n/2} + 1}^{m + d}\abs{\CF(k)}$. For the first sum, by~\eqref{eq:slice} and $m\ge t+1$ we have
	\begin{align}\label{eq:last}
		\qbinom{n}{t}^{-1} \cdot \sum_{k = m}^{\fl{n/2}}\abs{\CF(k)}
		&\ls \sum_{k = m}^{\fl{n/2}}\qbinom{n}{t}^{-1} \cdot \qbinom{n - k + t}{t}\ls \sum_{k = t + 1}^{\infty}\qbinom{n}{t}^{-1} \cdot \qbinom{n - k + t}{t}\nonumber\\
		&= \sum_{k = t + 1}^{\infty}\frac{(q^{n - k + t} - 1)\cdots(q^{n - k + 1} - 1)}{(q^n - 1)\cdots(q^{n - t + 1} - 1)}\nonumber\\
		&< \sum_{k = t + 1}^{\infty}\frac{q^{n - k + t} \cdots q^{n - k + 1}}{q^{n} \cdots q^{n - t + 1}}< \sum_{r = 1}^{\infty}q^{-tr}= \frac{1}{q^t - 1}.
	\end{align}

	For the second sum, recall that $m\le\frac{n-d}{2}$ and so
	\begin{align}\label{eq:last2}
		\qbinom{n}{t}^{-1} \cdot \sum_{k = \fl{n/2} + 1}^{m + d}\abs{\CF(k)}
		&\ls \sum_{k = \fl{n/2} + 1}^{\fl{(n - d)/2} + d} \qbinom{n}{t}^{-1} \cdot \qbinom{k + t}{t}= \sum_{k = \fl{n/2} + 1}^{\fl{(n + d)/2}} \frac{(q^{k + t} - 1)\cdots(q^{k + 1} - 1)}{(q^n - 1)\cdots(q^{n - t + 1} - 1)}\nonumber\\
		&\ls \sum_{k = \fl{n/2} + 1}^{\fl{(n + d)/2}} \frac{q^{k + t}\cdots q^{k + 1}}{q^n\cdots q^{n - t + 1}}< \sum_{r = 1}^{\infty} q^{-tr}= \frac{1}{q^t - 1},
	\end{align}
    where the penultimate inequality follows from $\fl{(n + d)/2}+t<n$. Thus, $\abs{\CF} < \frac{2}{q^t - 1}\cdot\qbinom{n}{t}$.

    We may then assume that $m = m_\CF \ls t$. Let $M = \max\supp(\CF)$ and further assume that
	\[M \gs \left\{\begin{aligned}
		&t + 1,&\qquad&\text{if } d = 2t;\\
		&t + 2,&\qquad&\text{if } d = 2t + 1.
	\end{aligned}\right.\]
   Then, we have
	\[\abs{\CF}\ls \sum_{i = 0}^{t - 1}\qbinom{n}{i} + \abs{\CF(t)} + \sum_{k = t + 1}^{M}\abs{\CF(k)}.\]

    Let us first bound the size of $\CF(t)$. To this end, fix $Y \in \CF(M)$ and note that for any $X \in \CF(t)$,
	\[\dim(X \cap Y) = \ce{\pt{\dim X + \dim Y - \Delta(X, Y)}/2} \gs \ce{\pt{t + M - d}/2} \gs 1.\]
	Thus by~\cref{counting}, we have
	
	\[\abs{\CF(t)} \ls \abs{\{X \in \CV(t): \dim(X \cap Y) \ne 0\}} = \qbinom{n}{t} - q^{Mt}\qbinom{n - M}{t}.\]
	
	Note that
	\begin{align*}
		\frac{q^{Mt}\qbinom{n - M}{t}}{\qbinom{n}{t}}
		&= \frac{(q^n - q^M)\cdots(q^{n - t + 1} - q^M)}{(q^n - 1)\cdots(q^{n - t + 1} - 1)}\gs \frac{(q^n - q^M)\cdots(q^{n - t + 1} - q^M)}{q^n\cdots q^{n - t + 1}}\\
		&= \prod_{i = 1}^{t}\pt{1 - \pt{q^{-1}}^{n - M - t + i}}\gs \prod_{i = 1}^{t}\pt{1 - \pt{q^{-1}}^{1 + i}}\gs \prod_{i = 1}^{t}\pt{1 - \pt{2^{-1}}^{1 + i}}\\
		&\gs \left\{\begin{aligned}
			&\pt{1 - 4^{-1}}\pt{1 - 8^{-1}} = \frac{21}{32},&\qquad&\text{if } t = 2;\\
			&2\prod_{i = 1}^{\infty}\pt{1 - \pt{2^{-1}}^{i}} \gs \frac{1}{2},&\qquad&\text{if } t \gs 3.
		\end{aligned}\right.
	\end{align*}
To see the last inequality $y := \prod\limits_{i = 1}^{\infty}\pt{1 - 2^{-i}} \gs \frac{1}{4}$, set $f(x) = \prod\limits_{i = 1}^{\infty}\pt{1 - x^i}^{-1}$ where $x < 2/3$, it is the generating function of the \emph{partition function} $p(k)$, which is the number of distinct ways of representing $k$ as a sum of positive integers. Note that $p(k) \ls e^{\pi\sqrt{2k/3}}$ (see\cite{MR2471621}) and $e^{\pi\sqrt{2k/3}} \ls (3/2)^k$ for $k \gs 41$; it can be verified (OEIS: A000041) that $p(k) \ls (3/2)^k$ for $1 \ls k \ls 40$. Thus, $f(x) = \sum\limits_{k = 0}^{\infty}p(k)x^k \ls \sum\limits_{k = 0}^{\infty}(3x/2)^k = {2}/\pt{2 - 3x}$, and $y=1/f(\frac{1}{2}) \gs 1/(\frac{2}{2-3/2}) = 1/4$. Therefore, $\abs{\CF(t)}\ls c_1\qbinom{n}{t}$, where $c_1=11/32$ if $t=2$ and $c_1=1/2$ if $t\ge 3$.

To bound $\sum\limits_{k = t + 1}^{M}\abs{\CF(k)}$, if $\max\cb{q, t} > 2$, then by~\eqref{eq:last} and~\eqref{eq:last2} we have $\sum\limits_{k = t + 1}^{M}\abs{\CF(k)} \ls \frac{2}{q^t - 1}\qbinom{n}{t}\ls \frac{2}{7}\qbinom{n}{t}$. If $q=t=2$, then note that there are at most $5$ summands if $t = 2$ since $M \ls t + d$. In this case, based on the calculation in~\eqref{eq:last} and~\eqref{eq:last2}, $\qbinom{n}{t}^{-1}\cdot\sum\limits_{k = t + 1}^{M}\abs{\CF(k)} \ls \sum\limits_{a \in A}a$, where $A$ is a $5$-term-subset of the multiset $\cb{1/4, 1/4, 1/16, 1/16, 1/64, 1/64, 1/256, 1/256, \ldots}$, then $\sum\limits_{k = t + 1}^{M}\abs{\CF(k)} \ls \frac{41}{64}\qbinom{n}{t}$. 
	
	Put the upper bounds together, we have
    \begin{align*}
		\abs{\CF}
		&\ls \sum_{i = 0}^{t - 1}\qbinom{n}{i} + \abs{\CF(t)} + \sum_{k = t + 1}^{M}\abs{\CF(k)}\\
		&\ls \sum_{i = 0}^{t - 1}\qbinom{n}{i} + c_1\qbinom{n}{t} + c_2\qbinom{n}{t}\\
		&< \sum_{i = 0}^{t}\qbinom{n}{i}.
	\end{align*}
	where $(c_1, c_2) = \pt{11/32, 41/64}$ if $q = t = 2$, and $\pt{1/2, 2/7}$ if $\max\cb{q, t} > 2$.

	We are left with the case when 
	\[M \ls \left\{\begin{aligned}
		&t,&\qquad&\text{if } d = 2t;\\
		&t + 1,&\qquad&\text{if } d = 2t + 1.
	\end{aligned}\right.\]
	When $d=2t$, $\abs{\CF}=\sum_{i\ls M}\abs{\CF(i)}\le \sum_{i = 0}^{t}\qbinom{n}{i}$. When $d=2t+1$, note that $\CF(t + 1)$ is $1$-intersecting in $V$ if $d = 2t + 1$, so $\abs{\CF(t + 1)} \ls \qbinom{n - 1}{t}$ by~\eqref{eq:slice}. Therefore $\abs{\CF}=\sum_{i\ls M}\abs{\CF(i)}\le \sum_{i = 0}^{t}\qbinom{n}{i}+\qbinom{n - 1}{t}$. 
    
    This completes the proof.
	
	\nocite{*}
	
	\bibliographystyle{abbrv}
	
	\bibliography{main}

\end{document}